\documentclass[11pt,a4paper,reqno]{amsart}
\usepackage[utf8]{inputenc}
\usepackage{mystyle}
\usepackage[foot]{amsaddr}

% Paragraph indent and skip
\parindent 0mm
\parskip   5mm

\newcommand{\By}[2]{\overset{\mbox{\tiny{#1}}}{#2}}
\newcommand{\ByRef}[2]{   \By{\eqref{#1}}{#2} }

\newcommand{\leByRef}[1]{ \ByRef{#1}{\le} }
\newcommand{\geByRef}[1]{ \ByRef{#1}{\ge} }

% Defines comment styles

% Leave uncommented
\def\firstauth#1{}
\def\second#1{}
\def\third#1{}
% Short for comments (COMMENT OUT for clean output)
 % COMMENT OUT for clean output
 % COMMENT OUT for clean output
 % COMMENT OUT for clean output
 % COMMENT OUT for clean output

\begin{document}

\title{On cospectral graphons}

\author[Hladk\'y]{Jan Hladk\'y}
\address{Institute of Computer Science of the Czech Academy of Sciences, Pod Vod\'{a}renskou v\v{e}\v{z}\'{\i} 2, 182~07 Prague, Czechia. With institutional support RVO:67985807.}
\email{hladky@cs.cas.cz}
\thanks{(JH): Research supported by Czech Science Foundation Project 21-21762X.}

\author[I\v{l}kovi\v{c}]{Daniel I\v{l}kovi\v{c}}
\address{Faculty of Informatics, Masaryk University, Botanick\'a 68A, 602 00 Brno, Czech Republic.}
\email{493343@mail.muni.cz}
\thanks{(DI, XS): Research supported by the MUNI Award in Science and Humanities (MUNI/I/1677/2018) of the Grant Agency of Masaryk University.}

\author[León]{Jared León}
\email{jared.leon@warwick.ac.uk}
\address{Mathematics Institute, University of Warwick, Coventry, UK}
\thanks{(JL): Research supported by the Warwick Mathematics Institute Centre for Doctoral Training.}
  
\author[Shu]{Xichao Shu}
\email{540987@mail.muni.cz}

\keywords{graphons, spectrum, cospectral graphs}

\begin{abstract}
In this short note, we introduce cospectral graphons, paralleling the notion of cospectral graphs. As in the graph case, we give three equivalent definitions: by equality of spectra, by equality of cycle densities, and by a unitary transformation. We also give an example of two cospectral graphons that cannot be approximated by two sequences of cospectral graphs in the cut distance.
\end{abstract}

\maketitle

\section{Introduction}\label{sec:Intro}
There are several prominent equivalences defined on finite graphs on vertex set $[n]$ which have the following form for graphs $G$ and $H$,
\begin{enumerate}[(F1)]
    \item\label{F1} For the adjacency matrix $A_G$ of $G$ and for the adjacency matrix $A_H$ of $H$ we have $TA_H=A_GT$ for a suitable $n\times n$ matrix $T\in\mathcal{M}_n$, and equivalently
    \item\label{F2} We have the equality of homomorphism counts $\hom(F,G)=\hom(F,H)$ for all $F\in\mathcal{F}$.\footnote{Recall that a \emph{homomorphism of a graph $F$ to a graph $G$} is any map $h:V(F)\to V(G)$ such that $h(u)h(v)\in E(G)$ for every $uv\in E(F)$. The symbol $\hom(F,G)$ denotes the size of the set of all homomorphisms.}
\end{enumerate}
These equivalencies are graph isomorphism, fractional isomorphism, cospectrality, and quantum isomorphism. Let us give details and specify the families of matrices $\mathcal{M}_n$ and of graphs $\mathcal{F}$.
\begin{itemize}
    \item The most important of such equivalences is the usual \emph{graph isomorphism}. In that case, obviously, $\mathcal{M}_n$ is the set of all $n\times n$ permutation matrices. The characterization of graph isomorphism using~\ref{F2} is a famous result of Lov\'asz~\cite{MR214529}, that is, for  $\mathcal{F}=\{\text{all graphs}\}$, the test~\ref{F2} passes only for $G$ and $H$ isomorphic.
\item \emph{Fractional isomorphism} concept was introduced in~\cite{MR0843938} using~\ref{F1} for $\mathcal{M}_n$ equal to all doubly stochastic matrices. Later, several further equivalent definitions were given in~\cite{MR1297385}. Finally, Dvořák~\cite{MR2668548} gave a characterization as in~\ref{F2}, with $\mathcal{F}$ being all trees. 
\item \emph{Cospectrality} (sometimes called \emph{isospectrality}) is derived from linear algebra, where this notion denotes equality of spectra (including multiplicities) of matrices. For graphs, this concept is applied to adjacency matrices.\footnote{Less often, some other matrices, such as the normalized Laplacian, are used. These alternative definitions lead to different notions of cospectrality.} Basic linear algebra then asserts that for a characterization using~\ref{F1} we need to take $\mathcal{M}_n$ to be all orthonormal $n\times n$ matrices. For a characterization using~\ref{F2} we take $\mathcal{F}$ to be all cycles. The key insight to infer the equivalence of these two definitions is that when we write $C_k$ for the cycle of length $k$, $\hom(C_k,G)$ is equal to the trace of $A_G^k$, which is in turn equal to sum of the $k$-th powers of the eigenvalues.
\item \emph{Quantum isomorphism} is the newest addition to the list. It was introduced in~\cite{ATSERIAS2019289}, using two definitions, of which we mention only the one in the form~\ref{F1}. The class of matrices $\mathcal{M}_n$ is in a sense similar to that used in fractional isomorphism. Indeed, it is also required the entries in each row and column sum to the identity. The difference is that the entries are now not reals in $[0,1]$ but rather orthogonal projectors (say, in an $n$-dimensional complex vector space). In~\cite{quantumisomorphismequivalentequality} it was shown that this definition is equivalent to~\ref{F2} with $\mathcal{F}$ being all planar graphs.
\end{itemize}

In this note we translate the concept of cospectrality to graphons. Graphons are analytic objects introduced by Borgs, Chayes, Lovász, Sós, Szegedy, and Vesztergombi~\cite{LOVASZ2006933,BORGS20081801} to represent limits of sequences of graphs. We recall the definition and properties we shall need in Section~\ref{sec:Preliminaries}. At this moment, it suffices that each graphon is a symmetric bounded Lebesgue measurable function on $[0,1]^2$. In particular, it can be viewed as an integral kernel operator on $L^2([0,1])$. There are several directions in which graphons can be studied. Below are the two most important ones.
\begin{itemize}
    \item Graphons can be studied \emph{per se}, often motivated by concepts from finite graphs. As one of the most prominent examples, the seminal paper~\cite{LOVASZ2006933} introduces the notion of \emph{homomorphism density} $t(F,W)$ (here, $F$ is a graph and $W$ is a graphon) which is motivated by the notion of homomorphism counts $\hom(F,G)$ (or, perhaps better, by the notion of homomorphism density $\hom(F,G)v(G)^{-v(F)}$. Other concepts include for example notions of max-cut~\cite{BORGS20081801}, or the chromatic number~\cite{HlaRoc:IndependentSets}. Our attention is on~\ref{F1} and~\ref{F2}. Note that there is a very easy way to introduce the counterparts of graph isomorphism, fractional isomorphism, cospectrality, or quantum isomorphism for graphons by the above characterizations using~\ref{F2}. Namely, we will say that two graphons $U$ and $W$ are (i)~\emph{weakly isomorphic}, (ii)~\emph{fractionally isomorphic}, (iii)~\emph{cospectral}, (iv)~\emph{quantum isomorphic} if $t(F,U)=t(F,W)$ for (i)~every graph~$F$, (ii)~every tree~$F$, (iii)~every cycle~$F$, (iv)~every planar graph~$F$, respectively. It is now of interest to extend any further properties of these equivalence relations to graphons. As for weak isomorphism, the main result of~\cite{BoChLo:Moments} asserts that two graphons $U$ and $W$ are weakly isomorphic if and only if for every $\eps>0$ there exists a unitary Koopman operator $T$ so that the operator norm of $TU-WT$ is less than $\eps$. Recall that being a unitary Koopman operator amounts to the existence of a measure-preserving bijection $\pi:[0,1]\to[0,1]$ such that $(Tf)(x)=f(\pi(x))$ for every $f\in L^2([0,1])$ and $x\in[0,1]$. This is a clear counterpart to~\ref{F1}.\footnote{Note that we cannot achieve precisely $TU=WT$ in some situations, see Figure~7.1 in~\cite{MR3012035}.} The theory of fractional isomorphism was introduced by Grebík and Rocha,~\cite{Grebik2022}. Specifically, \cite{Grebik2022} gives graphon counterparts to all known characterizations of graph fractional isomorphism and proves their equivalence to the above definition using tree densities. The characterization of~\ref{F1} is $TU=WT$ for a suitable `Markov operator' $T$. One of the main contributions in this note is that in Theorem~\ref{thm:spec_equiv} we give several definitions of `cospectral graphons' and prove their equivalence. As far as we know, no attempt to extend the notion of quantum isomorphism to graphons has been made.
    \item A particularly important line of research is in investigating continuity properties of various graph(on) parameters. The most prominent example is again the homomorphism density. In particular, the so-called Counting lemma asserts that if $(G_n)_n$ is a sequence of graphs converging to a graphon $W$, then for every graph $F$ we have $\hom(F,G_n)v(G_n)^{-v(F)}\to t(F,W)$. This yields positive results for the for equivalences above. Namely, suppose that $(G_n)_n$ and $(H_n)_n$ are sequences of graphs so that for each $n$, the graphs $G_n$ and $H_n$ are of the same order and isomorphic, or fractionally isomorphic, or cospectral, or quantum isomorphic. Suppose further that $(G_n)_n$ converges to a graphon $U$ and $(H_n)_n$ converges to a graphon $W$. Then by the above, $G_n$ and $H_n$ have the same counts (and thus also densities) of homomorphisms from each graph, or from each tree, or from each cycle, or from each planar graph, respectively. By the continuity of homomorphism densities, these equalities are inherited to $U$ and $W$. We conclude that $U$ and $W$ are weakly isomorphic, or fractionally isomorphic, or cospectral, or quantum isomorphic. The main result of~\cite{HlaHng:ApproximatingFIG} goes in the opposite direction in the case of fractional isomorphism: If $U$ and $W$ are fractionally isomorphic then we can find sequences of graphs $G_n\to U$ and $H_n\to W$ such that for each $n$, the graphs $G_n$ and $H_n$ are fractionally isomorphic. The second result of this note, Theorem~\ref{thm:cospectralinaproximable}, is that the counterpart of this result for cospectrality does not hold.
\end{itemize}

\section{Preliminaries}\label{sec:Preliminaries}
For a graph $G$ we denote by $v(G)$ the number of vertices of $G$ and $e(G)$ the number of edges. By $C_k$ we denote cycle of length $k$.

%\section{Cospectral graphs}

%Two non-isomorphic graphs are considered \emph{cospectral} concerning a given matrix if they have the same eigenvalues. Cospectral graphs help to show the limitations that the spectrum of a particular matrix might have in distinguishing the properties of a graph. Several different matrices are used in spectral graph theory, which can reveal different information about a graph. So graphs may be cospectral concerning some matrix but not cospectral concerning another matrix (though there are graphs which are cospectral concerning all matrices).

%Some of the common matrices that are studied include the adjacency matrix A, the combinatorial Laplacian $L := D - A$, the signless Laplacian $Q := D + A$, the normalized Laplacian $\mathcal{L}:= D^{-1/2}(D - A)D^{-1/2}$ (with the convention that if a vertex is isolated then the corresponding entry id $D^{-1/2}$ is 0), and the Seidel matrix $S := J - I - 2A$. The Seidel matrix is not as commonly studied and is defined by putting a -1 for each edge, a 1 for each non-edge, and a 0 on the diagonal entries.

%\section{Cospectral graphons}

%In this section, we generalize the notion of the cospectral graphs to cospectral graphons. A
We now recall basic of the theory of graphons, using mostly notation from the excellent monograph~\cite{MR3012035}. A \emph{graphon} is a symmetric measurable function $W : [0, 1]^2 \rightarrow [0,1]$. When we view the sets $[0,1]$ or $[0,1]^2$ as measure spaces, we take the underlying measure to be the Lebesgue measure. The \emph{cut-norm distance} is defined as
\begin{equation}\label{eq:defCutNorm}
d_\square(U, W) = \underset{S, T \subseteq [0,1]}{\sup}\left|\int_{S\times T} U(x, y) - W(x, y) \, dx\, dy\right|,
\end{equation}
where the supremum ranges over all measurable subsets $S, T$ of $[0,1]$. The \emph{cut-distance} is defined as 
\begin{equation*}
\delta_\square(U, W) = \inf_\varphi d_\square(U^\varphi, W),    
\end{equation*}
where infimum ranges over all measure preserving bijections $\varphi: [0,1] \rightarrow [0,1]$ and $U^\varphi (x,y) = U(\varphi(x), \varphi(y))$. Obviously, the cut-distance is in fact a pseudodistance.

Recall that each graph can be represented as a graphon. That is, if $G$ is a graph on vertex set $V$, then we define a graphon $W_G$ in the following way. Partition $[0,1]$ into sets $\{\Omega_v\}_{v\in V}$ of measure $\frac{1}{|V|}$ each. The representation $W_G$ is not unique as it depends on the choice of the partition $\{\Omega_v\}_{v\in V}$. However any two representations are at (pseudo-)distance~0 with respect to the cut distance. The phrase \emph{sequence of graphs $(G_n)_n$ converges to $W$} which appeared several times above then means $\delta_\square(W_{G_n},W)\to 0$ and $v(G_n)\to\infty$.

The density of a graph $G$ in a graphon $W$ is defined as
\begin{equation*}
t(G, W) = \int_{(x_v)_{v\in v(G)} \in [0,1]^{V(G)}} \underset{uv\in E(G)}{\prod} W(x_u,x_v)\prod_{v\in V(G)}dx_v\;.
\end{equation*}

So far, the analogies between the adjacency matrix of a graph and a graphon were made in the original domain $[0,1]^2$. In the language of Fourier transform, this would correspond to the spatial perspective. We now move to the dual perspective. Namely, we recall formalism that allows us to study spectral properties of graphons. Details can be found in Section~7.5 of~\cite{MR3012035}. Each graphon $W$ can be associated with an operator $T_W: L^2([0,1]) \rightarrow L^2([0,1])$ defined by 
\begin{equation*}
(T_W f)(x) = \int_0^1 W(x,y)f(y)\, dy\quad \mbox{for every $x\in[0,1]$}. 
\end{equation*}
It is known that $T_W$ is a symmetric real Hilbert-Schmidt operator. In particular, $f\in L^2([0,1])$ and $\lambda\in\mathbb{R}$ are \emph{eigenvector} and \emph{eigenvalue} of $W$ if $T_W f=\lambda f$. Since $T_W$ is a compact operator it has a discrete real spectrum (that is, the multiset of eigenvalues), denoted by $Spec(W)$. It is well-known that for each $\eps>0$ the number of eigenvalues of modulus at least $\eps$ (including multiplicities) is finite. Further, Parseval's Theorem (see e.g.~(7.23) in~\cite{MR3012035}) asserts that
\begin{equation}\label{eq:Parseval}
\|W\|_2^2=    \sum_{\lambda \in Spec(W)} \lambda^2\;.
\end{equation}

We mentioned in Section~\ref{sec:Intro} that for $k\ge 3$ and a graph $G$, the quantity $\hom(C_k,G)$ is the sum of the $k$-th powers of the eigenvalues of the adjacency matrix of $G$. The graphon counterpart to this is (see~(7.22) in~\cite{MR3012035})
\begin{equation}
\label{eq:eigen_cycles}
    t(C_k, W) =  \sum_{\lambda \in Spec(W)} \lambda^k\quad\mbox{ for each $k\ge 3$.}
\end{equation}

Last, recall that an operator $T$ on $L^2([0,1])$ is \emph{unitary} if $T$ is surjective and for every $f \in L^2([0,1])$, we have $\| T f \|_2 = \| f \|_2$. Unitary operators are functional-analytic counterparts to orthonormal matrices.

\section{Equivalent definitions of cospectral graphons}
We defined cospectral graphons in Section~\ref{sec:Intro}. Since not all the concepts were defined back then, we repeat the definition here.
\begin{definition}
    Graphons $U$ and $W$ are \emph{cospectral} if and only if for all integers $k\ge 3$, $t(C_k, U) = t(C_k, W)$.
\end{definition}

In our first main theorem, we prove that graphon cospectrality has several equivalent definitions.
\begin{theorem}
\label{thm:spec_equiv}
    For every two graphons $U$ and $W$, the following are equivalent: 
    \begin{enumerate}[(i)]
        \item \label{stm:1} For all integers $k$ bigger than $3$, we have  $t(C_k, U) = t(C_k, W)$.
        \item \label{stm:2} There are infinitely many odd numbers $k$ and infinitely many even numbers $k$, such that $t(C_k, U) = t(C_k, W)$.
        \item \label{stm:3} Graphons $U$ and $W$ have the same spectra, that is, $Spec(U) = Spec(W)$.
        \item \label{stm:4} There exists a unitary operator $T: L^2([0,1]) \rightarrow L^2([0,1])$, such that $T \circ T_W = T_U \circ T$.
    \end{enumerate}
\end{theorem}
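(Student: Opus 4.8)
The plan is to establish the implications $\ref{stm:1}\Rightarrow\ref{stm:2}$, $\ref{stm:2}\Rightarrow\ref{stm:3}$, $\ref{stm:3}\Rightarrow\ref{stm:4}$, $\ref{stm:4}\Rightarrow\ref{stm:3}$, and $\ref{stm:3}\Rightarrow\ref{stm:1}$; together these give the claimed equivalence. Three of them, namely $\ref{stm:1}\Rightarrow\ref{stm:2}$, $\ref{stm:4}\Rightarrow\ref{stm:3}$, and $\ref{stm:3}\Rightarrow\ref{stm:1}$, are routine, and the substance of the theorem lies in $\ref{stm:2}\Rightarrow\ref{stm:3}$ and $\ref{stm:3}\Rightarrow\ref{stm:4}$.

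For the routine implications: $\ref{stm:1}\Rightarrow\ref{stm:2}$ is immediate, since $\{k\in\mathbb{Z}:k>3\}$ contains infinitely many odd and infinitely many even integers. For $\ref{stm:3}\Rightarrow\ref{stm:1}$ I would invoke the trace identity \eqref{eq:eigen_cycles}: for every $k\ge 4$ we get $t(C_k,U)=\sum_{\lambda\in Spec(U)}\lambda^k=\sum_{\lambda\in Spec(W)}\lambda^k=t(C_k,W)$, where the series converge absolutely because every eigenvalue has modulus at most $1$ (as $\|T_W\|_{2\to2}\le\|W\|_\infty\le1$) and $\sum\lambda^2<\infty$ by \eqref{eq:Parseval}. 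For $\ref{stm:4}\Rightarrow\ref{stm:3}$: from $T\circ T_W=T_U\circ T$ with $T$ unitary we obtain $T_U=T\circ T_W\circ T^{-1}$, and unitarily conjugate operators have identical spectra including multiplicities, so $Spec(U)=Spec(W)$.

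The analytic heart is $\ref{stm:2}\Rightarrow\ref{stm:3}$, which I would prove by contradiction. Suppose $Spec(U)\ne Spec(W)$ and let $m>0$ be the largest modulus at which the two spectra disagree in multiplicity; this maximum is attained because, for every $\eps>0$, each of $U$ and $W$ has only finitely many eigenvalues of modulus $\ge\eps$. Write $a,a'$ for the multiplicities of $+m$ in $Spec(U),Spec(W)$ and $b,b'$ for those of $-m$, so that $(a,b)\ne(a',b')$ while all eigenvalues of modulus exceeding $m$ have matching multiplicities. Analyzing $t(C_k,U)-t(C_k,W)$ for large $k$ via \eqref{eq:eigen_cycles}: the eigenvalues of modulus $>m$ cancel; the ones of modulus exactly $m$ contribute $(a+b-a'-b')\,m^k$ when $k$ is even and $(a-b-a'+b')\,m^k$ when $k$ is odd; and the total contribution of all eigenvalues of modulus $\le m'$, where $m'<m$ is the largest modulus occurring strictly below $m$ in either spectrum, is at most $(m')^{k-2}\bigl(\|U\|_2^2+\|W\|_2^2\bigr)$ in absolute value, which is $o(m^k)$ since $m'<m$. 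Dividing by $m^k$ and letting $k\to\infty$ through the infinitely many even, respectively odd, values furnished by \ref{stm:2} forces $a+b=a'+b'$ and $a-b=a'-b'$, hence $a=a'$ and $b=b'$, contradicting $(a,b)\ne(a',b')$. I expect the bookkeeping of multiplicities at level $m$, and the verification that there is a genuine spectral gap below $m$ making the error term $o(m^k)$, to be the most delicate points.

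For $\ref{stm:3}\Rightarrow\ref{stm:4}$ I would use the spectral theorem for the compact self-adjoint operators $T_U$ and $T_W$. Each of them decomposes $L^2([0,1])$ into the orthogonal direct sum of its eigenspaces, and by \ref{stm:3} the $\lambda$-eigenspace of $T_W$ has the same (possibly infinite) dimension as the $\lambda$-eigenspace of $T_U$ for every real $\lambda$; fixing an isometry between the two for each $\lambda$ and forming the orthogonal direct sum of these isometries produces a unitary $T:L^2([0,1])\to L^2([0,1])$ which by construction carries each eigenvector of $T_W$ to an eigenvector of $T_U$ with the same eigenvalue, which is precisely the relation $T\circ T_W=T_U\circ T$. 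The one point that needs care is the eigenvalue $0$: one must match $\ker T_U$ with $\ker T_W$, so it is cleanest to read $Spec$ in \ref{stm:3} as also recording the multiplicity of $0$ (that is, $\dim\ker T_W$), so that this matching is part of the hypothesis.
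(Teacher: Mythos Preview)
Your proposal is correct and follows essentially the same route as the paper: the same chain of implications, the same contradiction argument for \ref{stm:2}$\Rightarrow$\ref{stm:3} via the largest modulus $m$ of disagreement together with a parity analysis of the $\pm m$ multiplicities, and the same spectral-theorem construction for \ref{stm:3}$\Rightarrow$\ref{stm:4}. Your tail estimate $\sum_{|\lambda|\le m'}|\lambda|^{k}\le (m')^{k-2}\|W\|_2^2$ is in fact a bit cleaner than the paper's (which introduces an auxiliary cutoff $\beta<m'$, bounds the finitely many eigenvalues in $(\beta,m']$ termwise, and controls the remaining tail via the $\ell^2$--$\ell^k$ inequality), and you explicitly flag the need to match $\ker T_U$ with $\ker T_W$ when assembling the unitary, a point the paper leaves implicit.
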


To prove Theorem~\ref{thm:spec_equiv}, we shall prove $\ref{stm:1}\Rightarrow\ref{stm:2}\Rightarrow\ref{stm:3}\Rightarrow\ref{stm:1}$ and $\ref{stm:4}\Rightarrow\ref{stm:3}\Rightarrow\ref{stm:4}$. Implication $\ref{stm:1}\Rightarrow\ref{stm:2}$ holds trivially. Implication~$\ref{stm:3}\Rightarrow\ref{stm:1}$ follows directly from~\eqref{eq:eigen_cycles}.
     
 \begin{proof}[Proof of Theorem~\ref{thm:spec_equiv}, $\ref{stm:2}\Rightarrow\ref{stm:3}$]
Suppose that $Spec(U) \neq Spec(W)$. Let $\nu>0$ be the largest number such that there exist an eigenvalue of modulus $\nu$ with different multiplicity in $Spec(U)$ and in $Spec(W)$. Let $\alpha:=\sup\{|\lambda|:\lambda\in Spec(U)\cup Spec(W), |\lambda|<\nu\}$. We have $\alpha<\nu$. Further, let $\beta>0$ be small enough that
\begin{equation}\label{eq:zbytek}
\sum_{\lambda\in Spec(U),|\lambda|\le \beta}\lambda^2<\alpha^2    
\quad\mbox{and}\quad
\sum_{\lambda\in Spec(W),|\lambda|\le \beta}\lambda^2<\alpha^2    \;.
\end{equation}
Such a number $\beta$ exists since the eigenvalues are square-summable by~\eqref{eq:Parseval}.

Let $h_U=|\{\lambda\in Spec(U):|\lambda|\in (\beta,\nu)\}|$, $h_W=|\{\lambda\in Spec(W):|\lambda|\in (\beta,\nu)\}|$, and $h=h_U+h_W$.
Recall that the spectra of $U$ and $W$ are real and thus the only eigenvalues of modulus $\nu$ may be $\nu$ and $-\nu$. Let $m_U^+$ and $m_U^-$ be the multiplicities of $\nu$ and $-\nu$ in $Spec(U)$ and let $m_W^+$ and $m_W^-$ be the multiplicities of $\nu$ and $-\nu$ in $Spec(W)$. 
The key is the following observation, which follows for every $k\ge 3$ by substituting into~\eqref{eq:eigen_cycles},
\begin{align*}
t(C_k,U)=\underbrace{\sum_{\lambda\in Spec(U):|\lambda|>\nu} \lambda^k}_{\mathsf{(T1)}_U} + m_U^+ \nu^k + (-1)^k m_U^- \nu^k + \underbrace{\sum_{\lambda\in Spec(U):|\lambda|\in (\beta,\nu)} \lambda^k}_{\mathsf{(T2)}_U} + \underbrace{\sum_{\lambda\in Spec(U):|\lambda|\in (0,\beta)} \lambda^k}_{\mathsf{(T3)}_U}\;.
\end{align*}
We can write an analogous formula for $t(C_k,W)$, and get terms $\mathsf{(T1)}_W$, $\mathsf{(T2)}_W$, and $\mathsf{(T3)}_W$. We can express $|t(C_k,U)-t(C_k,W)|$, by grouping $\mathsf{(T1)}_U$ with $\mathsf{(T1)}_W$, then $\mathsf{(T2)}_U$ with $\mathsf{(T2)}_W$, and lastly $\mathsf{(T3)}_U$ with $\mathsf{(T3)}_W$. The terms $\mathsf{(T1)}_U$ and $\mathsf{(T1)}_W$ cancel perfectly due to the way we chose $\nu$. Next, we deal with $\mathsf{(T2)}$. Each summand contributes between $-\alpha^k$ and $\alpha^k$. There are $h_U$ summands in $\mathsf{(T2)}_U$. Similarly, there are $h_W$ summands in $\mathsf{(T2)}_W$. Hence, $|\mathsf{(T2)}_U-\mathsf{(T2)}_W| \le h\alpha^k$. Last, we deal with the terms $\mathsf{(T3)}$. We use the well-known inequality between the $\ell^p$-norm and the $\ell^q$-norm, $\|\cdot \|_p \ge \|\cdot\|_q$ for $p\le q$. For $p=2$ and $q=k$ this gives 
$$
|\mathsf{(T3)}_U|\le \left|\sum_{\lambda\in Spec(U):|\lambda|\in (0,\beta)} \lambda^k\right|\le \sum_{\lambda\in Spec(U):|\lambda|\in (0,\beta)} |\lambda|^k\le \left(\sum_{\lambda\in Spec(U):|\lambda|\in (0,\beta)} |\lambda|^2\right)^{k/2}\leByRef{eq:zbytek} \alpha^{k}\;,$$
and similarly $|\mathsf{(T3)}_W|\le \alpha^k$.
Putting all these bounds together, we conclude
\begin{equation}\label{eq:cycledifference}
    |t(C_k,U)-t(C_k,W)|=\left(m_U^+-m_W^+ + (-1)^k (m_U^- - m_W^-)\right)\nu^k \pm (h+2)\alpha^k\;.
\end{equation}
The point is that for $k$ large, $(h+2)\alpha^k$ is negligible with respect to $\nu^k$. Thus, to prove that for arbitrary large even $k$ or for arbitrary large odd $k$ we have $t(C_k,U)-t(C_k,W)\neq 0$, we only need to prove that for a suitable parity $P=0$ (corresponding to $k$ even) or $P=1$ (corresponding to $k$ odd) we have
\begin{equation}\label{eq:parity}
    m_U^+-m_W^+ + (-1)^P (m_U^- - m_W^-)\neq 0.
\end{equation}

We distinguish two cases. First, if we have $m_U^+ +m_U^-\neq m_W^+ +m_W^-$ then~\eqref{eq:parity} holds for $P=0$. Second, assume that we have $m_U^+ +m_U^-=m_W^+ +m_W^-$. In that case, $m_U^+ -m_U^-\neq m_W^+ -m_W^-$ (indeed, otherwise, we would have $m_U^+=m_W^+$ and $m_U^-=m_W^-$, a contradiction to the way we chose $\nu$). Thus, \eqref{eq:parity} holds for $P=1$.
\end{proof}

\begin{proof}[Proof of Theorem~\ref{thm:spec_equiv}, $\ref{stm:4}\Rightarrow\ref{stm:3}$]
 Since $T$ is a unitary operator, it is surjective and preserves the norm and scalar product. Hence, it also preserves the eigenvalues together with the dimensions of their eigenspaces. It follows that the multiplicities are preserved as well.
 \end{proof}

\begin{proof}[Proof of Theorem~\ref{thm:spec_equiv}, $\ref{stm:3}\Rightarrow\ref{stm:4}$]
The Spectral Theorem for compact self-adjoint operators tells us that we can decompose $L^2([0,1])$ according to the orthogonal eigenspaces of $T_U$,
\begin{equation*}
    L^2([0,1])=\bigoplus_{\lambda\in Spec(U)} K_\lambda\;.
\end{equation*}
Here, $\{K_\lambda\}_{\lambda\in Spec(U)}$ are mutually orthogonal spaces, each of dimension equal to the multiplicity of the eigenvector $\lambda$, and writing $P_K:    L^2([0,1])\to L^2([0,1])$ for the orthogonal projection on a closed subspace $K\subset L^2([0,1])$, we have $T_U=\sum_{\lambda\in Spec(U)}\lambda P_{K_\lambda}$. Likewise, we have $ L^2([0,1])=\bigoplus_{\lambda\in Spec(W)} L_\lambda$ and $T_W=\sum_{\lambda\in Spec(W)}\lambda P_{L_\lambda}$ for the eigenspaces of $T_W$. Since the spectra of $U$ and $W$ are the same including multiplicities, we can fix linear isometries $b_\lambda:L_\lambda\to K_\lambda$ for each $\lambda\in Spec(U)=Spec(W)$. It is clear that the operator $Tf:=\sum_{\lambda}b_\lambda\circ P_{L_\lambda}$ satisfies $T \circ T_W = T_U \circ T$. It is also clear that $T$ is surjective and preserves the $L^2$-norm.
\end{proof}

\section{Cospectral inapproximability}
As we mentioned in Section~\ref{sec:Intro}, the main motivation for our second main result is the following theorem of Hladký and Hng~\cite{HlaHng:ApproximatingFIG}.
\begin{theorem}
    Suppose that $U$ and $W$ are fractionally isomorphic graphons. Then there exist sequences  $(G_n)_n$ and $(H_n)_n$ of graphs such that $(G_n)_n$ converges to $U$, $(H_n)_n$ converges to $W$, and $H_n$ is $G_n$ are fractionally isomorphic for each $n$.
\end{theorem}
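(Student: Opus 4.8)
\emph{Proof proposal.} The guiding principle is that fractional isomorphism of finite graphs is a rigid, combinatorial condition: by the characterisation of Ramana, Scheinerman and Ullman~\cite{MR1297385}, two graphs $G$ and $H$ are fractionally isomorphic as soon as they admit equitable vertex partitions with the same list of part sizes and the same quotient matrix --- the block-constant doubly stochastic matrix matching the two partitions then intertwines the adjacency matrices. So, rather than hoping that samples of $U$ and of $W$ happen to be fractionally isomorphic, the plan is to \emph{force} $G_n$ and $H_n$ to carry one and the same equitable skeleton, while arranging that this skeleton is fine enough, and filled in faithfully enough, that $G_n\to U$ and $H_n\to W$.

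\emph{Step 1 (the common limit skeleton).} By the Grebík--Rocha theory of graphon fractional isomorphism~\cite{Grebik2022}, $U\cong_f W$ means that $U$ and $W$ have a common quotient (equivalently, the same iterated degree measure): there are a standard probability space $(X,\mu)$, a graphon $d$ on $(X,\mu)$, and measure-preserving maps $\pi_U,\pi_W\colon[0,1]\to X$ such that, for $V\in\{U,W\}$, the conditional expectation of $V(x,\cdot)$ given $\pi_V$ equals $d(\pi_V(x),\cdot)$; thus $(X,\mu,d)$ is the limit equitable partition shared by $U$ and $W$. Running colour refinement on $(X,\mu,d)$ yields, for each $n$, a finite measurable partition $X=X_1^{(n)}\sqcup\dots\sqcup X_{k_n}^{(n)}$ with rational cell masses sharing a common denominator $q_n$, over whose product cells the normalised class-degree functions $s\mapsto\mu(X_j^{(n)})^{-1}\int_{X_j^{(n)}}d(s,t)\,d\mu(t)$ are constant up to an error $\eta_n\to 0$; write $d^{(n)}_{ij}$ for the associated quotient densities. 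Pulling this back through $\pi_U$ and $\pi_W$ gives partitions $\mathcal P^{(n)}_U,\mathcal P^{(n)}_W$ of $[0,1]$ over which $U$ and $W$ are equitable up to $\eta_n$ with the same quotient densities.

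\emph{Step 2 (realising the skeleton by graphs).} Fix $n$, let $n_1,\dots,n_{k_n}$ be the intended part sizes, and choose $N_n=\sum_i n_i\to\infty$ divisible by $q_n$ and by a large auxiliary factor, so that the nearest integers to $d^{(n)}_{ij}n_j$ can be adjusted in $o(\min_i n_i)$ of their values to a symmetric integer matrix $B^{(n)}$ satisfying the edge-count compatibility $B^{(n)}_{ij}n_i=B^{(n)}_{ji}n_j$ and the parity constraints on its diagonal (the divisibility of $N_n$ renders all these roundings relatively negligible). Build $G_n$ on $N_n$ vertices by taking classes $P_i$ of sizes $n_i$ and, inside each $P_i$ and between each pair $P_i,P_j$, placing a graph that approximates in cut distance the corresponding restriction of $U$; since those restrictions are $o(1)$-regular by Step~1, every vertex of $P_i$ starts with $(1+o(1))B^{(n)}_{ii}$ neighbours in $P_i$ and $(1+o(1))B^{(n)}_{ij}$ in $P_j$, so a quantitative degree-fixing argument (repeatedly reroute an edge from a surplus vertex to a deficit vertex inside the relevant class or bipartite class; invoke Gale--Ryser for feasibility of the exact bipartite bi-degrees) corrects every degree while editing only $o(N_n^2)$ edges, leaving the partition $(P_i)_i$ \emph{exactly} equitable with quotient matrix $B^{(n)}$. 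Repeat the construction verbatim for $W$ along $\mathcal P^{(n)}_W$, with the \emph{same} $N_n$, the \emph{same} part sizes, and the \emph{same} target $B^{(n)}$ --- legitimate because the $W$-restrictions are approximately regular with the same quotient densities --- to obtain $H_n$.

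\emph{Step 3 (verification, and the main obstacle).} Stratified sampling of a graphon converges to that graphon, and editing $o(N_n^2)$ edges changes the cut distance by $o(1)$, so $G_n\to U$ and $H_n\to W$. Both $G_n$ and $H_n$ carry an equitable partition with identical part sizes and identical quotient matrix $B^{(n)}$, whence $G_n\cong_f H_n$ by~\cite{MR1297385}. The crux --- and the step I expect to be the main obstacle --- is Step~2: one must pass from the ``approximately equitable, approximately integral'' limit data to an honest finite graph whose prescribed partition is \emph{exactly} equitable with a \emph{prescribed} integer quotient, since an approximate common equitable partition would yield only approximate, hence no, fractional isomorphism. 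Making this work forces (i) the skeleton to arise from colour refinement of $(X,\mu,d)$, so that class-degrees genuinely stabilise and the within/between fill-ins are asymptotically regular; (ii) a careful choice of $N_n$ absorbing all rounding, parity and edge-count-compatibility issues; and (iii) a quantitative degree-correction lemma ensuring the correction costs only $o(N_n^2)$ edits, so that convergence to $U$ and $W$ survives. A subsidiary technical point is to extract the Step~1 structure from~\cite{Grebik2022} in directly usable form and to handle atoms of $\mu$ by splitting the corresponding oversized classes on the graph side.
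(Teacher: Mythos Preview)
This theorem is not proved in the paper: it is quoted there as a result of Hladk\'y and Hng~\cite{HlaHng:ApproximatingFIG}, serving only as motivation for the paper's own negative result on cospectral approximation (Theorem~\ref{thm:cospectralinaproximable}). There is therefore no proof in the present paper against which to compare your proposal.

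As a standalone sketch, your strategy is the natural one --- extract the common invariant quotient from the Greb\'{\i}k--Rocha machinery, discretise it to a finite common equitable skeleton, then realise that skeleton exactly by two graph sequences converging to $U$ and $W$ --- and you correctly flag the exact-versus-approximate equitability issue in Step~2 as the crux. The place I would push back is Step~1. The Greb\'{\i}k--Rocha structure provides a common invariant sub-$\sigma$-algebra (equivalently a common iterated degree measure), not a nested chain of \emph{finite} equitable partitions; your sentence ``running colour refinement on $(X,\mu,d)$ yields, for each $n$, a finite measurable partition \dots\ over whose product cells the normalised class-degree functions are constant up to $\eta_n$'' is doing a lot of unjustified work. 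A Szemer\'edi-type partition of $d$ controls averages over product cells but does not force the one-variable degree functions $s\mapsto \mu(X_j)^{-1}\int_{X_j}d(s,t)\,d\mu(t)$ to be nearly constant on each cell, which is precisely what ``approximately equitable'' means and what your Step~2 degree-rerouting argument consumes. Producing finite partitions that are approximately equitable for $U$ and for $W$ \emph{with the same part masses and the same quotient densities} is itself the heart of the problem; your remark about ``extracting the Step~1 structure from~\cite{Grebik2022} in directly usable form'' significantly understates the difficulty.
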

Here, we show that a counterpart of this theorem does not hold for cospectral graphons.
\begin{theorem}\label{thm:cospectralinaproximable}
    Consider graphons $U(x,y)=\frac{1}{2}$ and $W(x,y)=\mathbf{1}_{x\in [0,\frac12]}\cdot \mathbf{1}_{y\in [0,\frac12]}$ for $(x,y)\in[0,1]^2$. Then $U$ and $W$ are cospectral with $Spec(U)=Spec(W)=\{\frac12\}$.
    If we have sequences $(G_n)_n$ and $(H_n)_n$ of graphs such that $(G_n)_n$ converges to $U$, $(H_n)_n$ converges to $W$, then $G_n$ and $H_n$ are not cospectral for each $n$ sufficiently large.
\end{theorem}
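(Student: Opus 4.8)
The plan is to combine a short spectral computation with the elementary fact that cospectral finite graphs share both their order and their number of edges, and then to reach a contradiction from the fact that $U$ and $W$ have different edge densities.

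First I would verify the spectral assertion. Both $T_U$ and $T_W$ are rank-one operators: writing $\mathbf 1$ for the constant function $1$ on $[0,1]$ and $g=\mathbf 1_{[0,1/2]}$, one has $T_Uf=\tfrac12\langle f,\mathbf 1\rangle\,\mathbf 1$ and $T_Wf=\langle f,g\rangle\,g$ for every $f\in L^2([0,1])$. Hence the only nonzero eigenvalue of $T_U$ is $\tfrac12\|\mathbf 1\|_2^2=\tfrac12$, with one-dimensional eigenspace spanned by $\mathbf 1$ and everything orthogonal to it sent to $0$; similarly the only nonzero eigenvalue of $T_W$ is $\|g\|_2^2=\tfrac12$. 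Thus $Spec(U)=Spec(W)=\{\tfrac12\}$, and by~\eqref{eq:eigen_cycles} we get $t(C_k,U)=t(C_k,W)=2^{-k}$ for all $k\ge3$, so $U$ and $W$ are cospectral.

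Next I would record the two facts I need about a pair of cospectral finite graphs $G,H$. By definition their adjacency matrices have equal spectra including multiplicities; since the number of eigenvalues counted with multiplicity equals the order of the graph, this forces $v(G)=v(H)$, and since the sum of the squares of the eigenvalues of $A_G$ equals $\operatorname{tr}(A_G^2)=2e(G)$ (and likewise for $H$), it also forces $e(G)=e(H)$. On the analytic side, convergence $G_n\to U$ gives, via the Counting lemma (or directly from $|t(K_2,W_{G_n})-t(K_2,U)|\le\delta_\square(W_{G_n},U)\to0$), that $2e(G_n)v(G_n)^{-2}=\hom(K_2,G_n)v(G_n)^{-2}\to t(K_2,U)=\tfrac12$, while $H_n\to W$ gives $2e(H_n)v(H_n)^{-2}\to t(K_2,W)=\tfrac14$; recall also that $v(G_n),v(H_n)\to\infty$.

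Finally I would assemble these. Suppose, towards a contradiction, that $G_n$ and $H_n$ are cospectral for infinitely many indices $n$. For each such $n$ we have $v(G_n)=v(H_n)$ and $e(G_n)=e(H_n)$, hence $2e(G_n)v(G_n)^{-2}=2e(H_n)v(H_n)^{-2}$; passing to the limit along this infinite set of indices yields $\tfrac12=\tfrac14$, which is absurd. Therefore $G_n$ and $H_n$ are not cospectral for all sufficiently large $n$. I do not expect a genuine obstacle here; the two points that need care are to read ``cospectral'' for the finite graphs as equality of the full adjacency spectrum (so that $v(G_n)=v(H_n)$ is forced, rather than only the nonzero spectra agreeing), and to invoke graph convergence in the quantitative form of edge-density convergence rather than merely as convergence of cycle densities.
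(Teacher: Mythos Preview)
Your proof is correct and follows essentially the same approach as the paper: both arguments exploit that $U$ and $W$ have different edge densities ($\|U\|_1=\tfrac12$ versus $\|W\|_1=\tfrac14$), together with the fact that for $\{0,1\}$-valued objects the edge density coincides with $\sum_{\lambda}\lambda^2$, so cospectral approximants would be forced to share a quantity whose limits differ. The only difference is packaging: the paper formulates this step as a standalone proposition for arbitrary $\{0,1\}$-valued graphons via Parseval's identity $\|W'\|_2^2=\sum\lambda^2$, whereas you work directly with finite graphs using $\operatorname{tr}(A_G^2)=2e(G)$ and the observation $v(G_n)=v(H_n)$ to normalize.
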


%\begin{proposition}
%  \label{prop:cospec_seq_imply_cospectral}
%  Let~$U$ and~$W$ be graphons, and let~$U_n$ and~$W_n$ be sequences of graphons that
%  converge to~$U$ and~$W$ in cut-distance respectively. Then, if~$U_n$ and~$W_n$ are
%  cospectral for all~$n \geq 1$, the graphons~$U$ and~$W$ are cospectral.
%\end{proposition}
%\begin{proof}
%  By the counting lemma, for any graph~$F$, the sequence~$t(F, U_n)$ converges
%  to~$t(F, U)$. In particular, for an integer~$\ell \geq 2$, and by symmetry,
%  \begin{equation*}
%    t(C_\ell, U) = \lim_{n \to \infty} t(C_\ell, U_n) = \lim_{n \to \infty} t(C_\ell,
%    W_n) = t(C_\ell, W).
%  \end{equation*}
%  Then, graphons~$U$ and~$W$ are cospectral.
%\end{proof}

We now give a proof of Theorem~\ref{thm:cospectralinaproximable}. The statement about the spectra of $U$ and $W$ is easy to verify, with the only eigenvector of $U$ being the constant-1 and the only eigenvector of $W$ being $\mathbf{1}_{x\in [0,\frac12]}$. The particular property on which our proof depends is that $\|U\|_1\neq \|W\|_1$. That is, we prove that each two graphons with different $L^1$-norms (whether they are cospectral or not) cannot be cospectrally approximated, not only by finite graphs, but also by the more general class of $\{0,1\}$-valued graphons.
\begin{proposition}
  \label{prop:dist_norm_not_approx}
  Let~$U$ and~$W$ be graphons with~$\|U\|_1>\|W\|_1$. Suppose that $U'$ and $W'$ are two $\{0,1\}$-valued graphons with $\delta_\square(U,U'),\delta_\square(W,W')<(\|U\|_1-\|W\|_1)/2$. Then $U'$ are $W'$ not cospectral.
\end{proposition}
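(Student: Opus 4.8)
The plan is to isolate the one structural feature of $\{0,1\}$-valued graphons that separates them from general graphons: if $V$ is a $\{0,1\}$-valued graphon then $V(x,y)^2=V(x,y)$ pointwise, so $\|V\|_1=\int_{[0,1]^2}V=\int_{[0,1]^2}V^2=\|V\|_2^2$. Combining this with Parseval's identity~\eqref{eq:Parseval} gives $\|V\|_1=\sum_{\lambda\in Spec(V)}\lambda^2$. Hence, if $U'$ and $W'$ were cospectral, then by Theorem~\ref{thm:spec_equiv} we would have $Spec(U')=Spec(W')$, and therefore $\|U'\|_1=\sum_{\lambda\in Spec(U')}\lambda^2=\sum_{\lambda\in Spec(W')}\lambda^2=\|W'\|_1$. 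So it suffices to show that the hypotheses force $\|U'\|_1\ne\|W'\|_1$; in fact I would separate the two values by the midpoint $\tfrac12\big(\|U\|_1+\|W\|_1\big)$.

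The second ingredient is that the cut distance controls the difference of $L^1$-norms of (nonnegative) graphons. For any graphon $V$ and any measure-preserving bijection $\varphi$ one has $\|V^\varphi\|_1=\|V\|_1$, and since graphons are nonnegative, taking $S=T=[0,1]$ in~\eqref{eq:defCutNorm} gives $d_\square(V^\varphi,V')\ge\big|\int_{[0,1]^2}V^\varphi-\int_{[0,1]^2}V'\big|=\big|\|V\|_1-\|V'\|_1\big|$ for every graphon $V'$. Taking the infimum over $\varphi$ yields $\delta_\square(V,V')\ge\big|\|V\|_1-\|V'\|_1\big|$. Applying this with $(V,V')=(U,U')$ and $(V,V')=(W,W')$ and writing $c:=\|U\|_1-\|W\|_1>0$, the hypotheses give $\big|\|U\|_1-\|U'\|_1\big|<c/2$ and $\big|\|W\|_1-\|W'\|_1\big|<c/2$.

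Finally I would combine the two steps. From these bounds, $\|U'\|_1>\|U\|_1-c/2=\tfrac12\big(\|U\|_1+\|W\|_1\big)$, while $\|W'\|_1<\|W\|_1+c/2=\tfrac12\big(\|U\|_1+\|W\|_1\big)$, so $\|W'\|_1<\|U'\|_1$ and in particular $\|U'\|_1\ne\|W'\|_1$. By the first paragraph this is incompatible with $U'$ and $W'$ being cospectral, which completes the proof.

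I do not expect a genuine obstacle here: the proof is short and its only real content is the first step, namely noticing that $\|\cdot\|_1$ and $\|\cdot\|_2^2$ agree on $\{0,1\}$-valued graphons so that cospectrality pins down the $L^1$-norm. The one point to state carefully is the invariance $\|V^\varphi\|_1=\|V\|_1$ under measure-preserving bijections, since that is exactly what lets us pass from $d_\square$ to $\delta_\square$; everything else is the elementary remark that a rearrangement plus a small cut-norm perturbation cannot change the total mass $\int V$ by more than the size of that perturbation.
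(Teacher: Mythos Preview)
Your proof is correct and follows essentially the same route as the paper: both arguments use that for $\{0,1\}$-valued graphons $\|V\|_1=\|V\|_2^2$, that the cut distance bounds $\big|\|V\|_1-\|V'\|_1\big|$ via the choice $S=T=[0,1]$, and Parseval's identity~\eqref{eq:Parseval} to tie $\|\cdot\|_2^2$ to the spectrum. Your writeup is in fact slightly more explicit than the paper's in making the invariance $\|V^\varphi\|_1=\|V\|_1$ and the appeal to Theorem~\ref{thm:spec_equiv} overt.
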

\begin{proof}
As a preparatory step, we claim that
\begin{equation}\label{eq:prerp}
\int U(x,y)-U'(x,y)dx dy\le \delta_\square(U,U')\;.
\end{equation}

To verify this, we need to check that for every measure preserving bijection $\varphi:[0,1]\to[0,1]$ we have
\begin{equation*}
\int U (x, y) -U'(x, y) dx dy=\int U^{\varphi} (x, y) -U'(x, y) dx dy\le d_\square(U^{\varphi},U')\;.
\end{equation*}
This becomes obvious when we consider $S=T=[0,1]$ in~\eqref{eq:defCutNorm}.

Thus,
\[
\|U'\|_1=\int U'(x,y)dx dy=\int U(x,y)dx dy-\int U(x,y)-U'(x,y)dx dy\geByRef{eq:prerp} \|U\|_1-\delta_\square(U,U')\;.
\]

Since $U'$ is $\{0,1\}$-valued, and since $0^2=0$ and $1^2=1$, we have $\|U'\|_2^2=\|U'\|_1$. We conclude that $\|U'\|_2^2\ge \|U\|_1-\delta_\square(U,U')$.

Similarly, $\|W'\|_2^2\le \|W\|_1+\delta_\square(W,W')$. Combining with the main assumption of the proposition, we get $\|U'\|_2^2\neq \|W'\|_2^2$. By Parseval's Theorem~\eqref{eq:Parseval} we conclude that $U'$ and $W'$ are not cospectral.
\end{proof}

We pose as an open problem, whether we can remove the assumption $\|U\|_1\neq \|W\|_1$.
\begin{problem}
  Do there exist two cospectral graphons $U$ and $W$ with $\|U\|_1=\|W\|_1$ that cannot be cospectrally approximated?
\end{problem}

\bibliographystyle{amsabbrv}
\bibliography{mybib}

\end{document}